\documentclass[12pt]{article}
\usepackage{amsmath,amssymb,amsthm,comment}

\newtheorem*{theorem*}{Theorem}
\newtheorem{theorem}{Theorem}[section]
\newtheorem{lemma}[theorem]{Lemma}

\def\barr{\begin{array}}
\def\earr{\end{array}}

\title{On the number of edges of cyclic subgroup graphs of finite groups}
\author{Marius T\u arn\u auceanu}
\date{January 31, 2023}

\begin{document}

\maketitle

\begin{abstract}
In this note, we show that among finite nilpotent groups of a given order or finite groups of a given odd order, the cyclic group of that order has the minimum number of edges in its cyclic subgroup graph. We also conjecture that this holds for arbitrary finite groups.
\end{abstract}

{\small
\noindent
{\bf MSC2000\,:} Primary 20D60; Secondary 20D15, 05C07.

\noindent
{\bf Key words\,:} cyclic subgroup graphs, finite groups, element orders.}

\section{Introduction}

Recently, mathematicians constructed many graphs which are assigned to groups by different methods (see e.g. \cite{4}). One of them is the \textit{subgroup graph} $L(G)^*$ of a group $G$, that is the graph whose vertices are the subgroups of $G$ and two vertices, $H_1$ and $H_2$, are connected by an edge if and only if $H_1\leq H_2$ and there is no subgroup $K\leq G$ such that $H_1<K<H_2$. Finite groups with planar subgroup graph have been classified by Starr and Turner \cite{14}, Bohanon and Reid \cite{3}, and Schmidt \cite{13}. Also, the genus of $L(G)^*$ has been studied by Lucchini \cite{10}.

In the current note, we will focus on a remarkable subgraph of $L(G)^*$, namely the \textit{cyclic subgroup graph} $C(G)^*$ of $G$. Its vertex set is the poset $C(G)$ of cyclic subgroups of $G$ and, similarly, two vertices $H_1$ and $H_2$ are connected by an edge if and only if $H_1\leq H_2$ and $H_1<K<H_2$ for no cyclic subgroup $K$ of $G$. Note that the (cyclic) subgroup graph of a group $G$ is the Hasse diagram of the poset of (cyclic) subgroups of $G$, viewed as a simple, undirected graph.\newpage

We will prove that if $G$ is a nilpotent group of order $n$ or a group of odd order $n$, then the minimum number of edges of $C(G)^*$ is attained for the cyclic group $\mathbb{Z}_n$. This fact was somewhat expected, since finite group theory abounds with functions attaining their minimum/maximum at cyclic groups, such as the function sum of element orders \cite{1}, the function number of (cyclic) subgroups \cite{12,6,7} or the function number of edges of (directed) power graph \cite{5}.

Our main result is as follows.

\begin{theorem}
Let $G$ be a finite group of order $n$. If $G$ is nilpotent or $n$ is odd, then
\begin{equation}
|E(C(G)^*)|\geq|E(C(\mathbb{Z}_n)^*)|.\nonumber
\end{equation}Moreover, we have equality if and only if $G\cong\mathbb{Z}_n$.
\end{theorem}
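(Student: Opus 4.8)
The plan is to convert the edge count into an arithmetic sum over cyclic subgroups and then treat the two cases by different structural mechanisms. First I would pin down the covering relations of the poset $C(G)$. If $H_1 < H_2$ are cyclic, then since $H_2$ is cyclic all of its subgroups are cyclic and correspond to divisors of $|H_2|$, so there is a cyclic subgroup strictly between $H_1$ and $H_2$ exactly when $[H_2:H_1]$ is composite. Hence $H_1$ is covered by $H_2$ iff $[H_2:H_1]$ is prime, and a cyclic subgroup $H$ of order $m$ covers precisely one subgroup for each prime $p\mid m$ (the unique one of index $p$). Counting edges by their upper endpoint yields
\[
|E(C(G)^*)| \;=\; \sum_{H\in C(G)}\omega(|H|) \;=\; \sum_{x\in G}\frac{\omega(o(x))}{\varphi(o(x))},
\]
where $\omega$ counts distinct prime divisors, $\varphi$ is Euler's totient, $o(x)$ is the order of $x$, and the second equality groups the $\varphi(m)$ generators of each cyclic subgroup of order $m$. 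For $\mathbb{Z}_n$ this is $\sum_{d\mid n}\omega(d)$, so the theorem becomes a lower bound for $\sum_H\omega(|H|)$.

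For the nilpotent case I would use that $G$ is the direct product of its Sylow subgroups $P_p$, whence $C(G)\cong\prod_p C(P_p)$ as posets. A covering relation in a product poset alters exactly one coordinate, and in a $p$-group every nontrivial cyclic subgroup contributes $\omega=1$, so the formula factorises as
\[
|E(C(G)^*)| \;=\; \sum_p\bigl(c(P_p)-1\bigr)\prod_{q\neq p}c(P_q),
\]
where $c(P)$ is the number of cyclic subgroups of $P$. This expression is strictly increasing in each $c(P_p)$, so it suffices to prove the sublemma that a $p$-group of order $p^a$ satisfies $c(P)\ge a+1$, with equality iff $P\cong\mathbb{Z}_{p^a}$. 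I would prove this by counting elements by order: writing $p^b$ for the exponent, the identity $p^a=1+\sum_{k=1}^{b}c_{p^k}(P)\,(p^k-p^{k-1})$ (with $c_{p^k}(P)$ the number of cyclic subgroups of order $p^k$) together with $c_{p^k}(P)\ge 1$ for $0\le k\le b$ turns the minimisation of $c(P)=1+\sum_k c_{p^k}(P)$ into a small integer program whose optimum places all excess mass on the top order; the optimal value is $a+1$ exactly when $b=a$, i.e. when $P$ is cyclic. Feeding this through the monotone factorised formula gives the nilpotent case with its equality statement.

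The odd case is where I expect the genuine difficulty. Here I would pass to $f_G(d)=\#\{x\in G:x^d=1\}$ and invoke Frobenius' theorem, which gives $f_G(d)\ge\gcd(d,n)=f_{\mathbb{Z}_n}(d)$ for every $d$. Since a group of odd order has only elements of odd order, Möbius inversion over the lattice of odd integers ordered by divisibility produces universal coefficients $\lambda_d=\sum_{k\ \mathrm{odd}}\mu(k)\,\omega(dk)/\varphi(dk)$ with $|E(C(G)^*)|=\sum_{d\ \mathrm{odd}}\lambda_d\,f_G(d)$, valid for every group of odd order $n$ and in particular for $\mathbb{Z}_n$. Subtracting,
\[
|E(C(G)^*)|-|E(C(\mathbb{Z}_n)^*)| \;=\; \sum_{d\ \mathrm{odd}}\lambda_d\bigl(f_G(d)-\gcd(d,n)\bigr),
\]
where every factor $f_G(d)-\gcd(d,n)$ is nonnegative.

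The hard part will be establishing that $\lambda_d\ge 0$ for all odd $d\ge 3$: this is the crux of the whole argument, and the analogous coefficients attached to the prime $2$ can be negative, which is precisely why the inequality is only conjectured for arbitrary finite groups. Granting this positivity, the inequality follows at once, and for equality I would argue that a noncyclic $G$ has no element of order $n$, so $f_G$ differs strictly from $\gcd(\,\cdot\,,n)$ at a place carrying a positive coefficient, forcing $G\cong\mathbb{Z}_n$.
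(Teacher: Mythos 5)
Your covering-relation analysis, the edge formula $|E(C(G)^*)|=\sum_{x\in G}\omega(o(x))/\varphi(o(x))$, and your entire nilpotent argument are correct, and they essentially coincide with the paper's route: the paper likewise counts edges this way, factorizes over the Sylow decomposition, and uses the bound $c(P)\ge a+1$ for a $p$-group of order $p^a$ (which it cites from the literature, while you prove it by the element-order count; your proof of that sublemma is valid). The genuine gap is in the odd case. For the identity $|E(C(G)^*)|=\sum_{d\mid n}\lambda_d f_G(d)$ to make sense, the coefficients must be the finite sums
\[
\lambda_d=\sum_{k:\,dk\mid n}\mu(k)\,\frac{\omega(dk)}{\varphi(dk)}
\]
(your sum over all odd $k$ is not even convergent in any useful sense), and your plan requires $\lambda_d\ge 0$ for every odd $d\ge 3$. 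You flag this positivity as the hard step and suggest that only the prime $2$ causes negative coefficients, but the claim is false for odd numbers too: for $n=105$ and $d=7$ the admissible $k$ are $1,3,5,15$, giving
\[
\lambda_7=\frac{1}{6}-\frac{2}{12}-\frac{2}{24}+\frac{3}{48}=-\frac{1}{48}<0,
\]
and likewise $\lambda_5=-1/48$ for $n=105$, while already for $n=15$ one gets $\lambda_5=0$. So the termwise-nonnegativity argument cannot be completed (the total sum is of course still nonnegative, but not for the reason you give), and the zero/negative coefficients also break your equality argument, which needs a strictly positive coefficient at some divisor where $f_G$ exceeds $\gcd(\cdot,n)$.

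The underlying issue is that Frobenius' theorem bounds each count $f_G(d)$ separately, and this is too weak: the inequality genuinely requires correlating the order statistics of $G$ with those of $\mathbb{Z}_n$, not bounding them one divisor at a time. That is exactly what the paper's odd-case proof supplies. By Ladisch's positive answer to Isaacs' problem (for solvable groups, hence for all groups of odd order) there is a bijection $f:G\to\mathbb{Z}_n$ with $o(a)\mid o(f(a))$ for all $a$, and a short lemma shows that $d\mapsto\omega(d)/\varphi(d)$ is nonincreasing along divisibility among odd integers $d\ge 3$ — this is precisely where oddness enters, since the monotonicity fails when $d'$ is even, which is why the even case remains conjectural. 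Summing the lemma termwise over the bijection gives the inequality at once, and the lemma's equality analysis (equality forces $d'=d$, or $d=p^\alpha$ with $p\ge 5$ and $d'=3d$) feeds a Sylow argument that rules out noncyclic equality. Unless you replace coefficient positivity by some such pairing device, your odd-case plan cannot be repaired.
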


For the proof of Theorem 1.1, we need the following theorem.

\begin{theorem*}
Let $G$ be a finite group of order $n$. Then there is a bijection $f:G\longrightarrow\mathbb{Z}_n$ such that $o(a)$ divides $o(f(a))$, for all $a\in G$.
\end{theorem*}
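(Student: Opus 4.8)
The plan is to recast the statement as a bipartite perfect-matching problem and settle it with Hall's theorem. Form the bipartite graph whose two sides are $G$ and $\mathbb{Z}_n$, joining $a\in G$ to $z\in\mathbb{Z}_n$ exactly when $o(a)$ divides $o(z)$; a bijection $f$ with the required property is precisely a perfect matching of this graph. Since $|G|=|\mathbb{Z}_n|=n$, by Hall's theorem it suffices to verify that every $S\subseteq G$ satisfies $|N(S)|\geq|S|$, where $N(S)=\{z\in\mathbb{Z}_n:o(a)\mid o(z)\text{ for some }a\in S\}$.

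The first reduction is that the neighbourhood $N(a)=\{z:o(a)\mid o(z)\}$ depends only on $o(a)$ and shrinks as the order grows: if $o(a)\mid o(a')$ then $N(a')\subseteq N(a)$. Writing $a_G(m)$ for the number of elements of order $m$ in $G$ and recalling that $\mathbb{Z}_n$ has exactly $\varphi(m)$ elements of each order $m\mid n$, a vertex-cover (König) argument shows that it is enough to test the sets $S=\{a\in G:o(a)\in D\}$ as $D$ ranges over the down-sets of the divisibility order on the divisors of $n$. Thus Hall's condition becomes the assertion that
\begin{equation*}
\sum_{m\in D}a_G(m)\ \geq\ \sum_{m\in D}\varphi(m)
\end{equation*}
for every down-set (order ideal) $D$; call this $(\star)$. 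In words, $G$ must have at least as many elements whose order lies in $D$ as $\mathbb{Z}_n$ does.

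For a principal ideal $D_d=\{m:m\mid d\}$ with $d\mid n$, inequality $(\star)$ is exactly $L_G(d)\geq d$, where $L_G(d)=|\{x\in G:x^d=1\}|=\sum_{m\mid d}a_G(m)$ and $\sum_{m\mid d}\varphi(m)=d$; this is immediate from Frobenius's theorem, which gives that $L_G(d)$ is a nonzero multiple of $d$. The main obstacle is the general, non-principal case. If $D$ has maximal elements $d_1,\dots,d_k$, then inclusion--exclusion, together with $\{x:x^{d_i}=1\}\cap\{x:x^{d_j}=1\}=\{x:x^{\gcd(d_i,d_j)}=1\}$, rewrites $(\star)$ as the nonnegativity of an alternating sum of the nonnegative surpluses $\Lambda(m)=L_G(m)-m$ over the meet-semilattice generated by $d_1,\dots,d_k$. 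The alternating signs mean this does \emph{not} follow termwise from $\Lambda\geq0$, nor from the principal-ideal bounds alone: one can exhibit nonnegative integer data on the divisor lattice satisfying every bound $L(d)\geq d$ (even with the divisibility $\varphi(m)\mid a_G(m)$) yet violating $(\star)$, so genuine group structure must enter.

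To close this gap I would exploit the full lattice structure of the solution sets $S_d=\{x:x^d=1\}$, namely $S_{d'}\subseteq S_d$ for $d'\mid d$ and $S_d\cap S_{d'}=S_{\gcd(d,d')}$, which makes $d\mapsto L_G(d)$ supermodular on the divisor lattice, while on the target side $d\mapsto d$ is supermodular too and the two sides agree at the bottom ($L_G(1)=1$) and at primes (since $L_G(p)\geq p$ forces $a_G(p)\geq\varphi(p)$). I expect $(\star)$ to follow by induction on the number of maximal elements of $D$, peeling off one $d_k$ at a time and using supermodularity to show that any deficiency $a_G(d_k)<\varphi(d_k)$ at a top order is already paid for by the Frobenius-forced surplus at the orders beneath it. Controlling the negative inclusion--exclusion terms so that they never overwhelm the positive ones is exactly where the argument is delicate, and is the step I expect to be hardest. (For nilpotent $G$ everything is transparent: $G=\prod_pG_p$, $\mathbb{Z}_n=\prod_p\mathbb{Z}_{p^{a_p}}$, orders multiply, and the divisor lattice is a product of chains, so the product of the per-prime bijections works; the non-nilpotent case is the substantive one.)
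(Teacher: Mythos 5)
You should first be aware of how the paper handles this statement: it does not prove it at all. This is Isaacs' question (Problem 18.1 of the Kourovka Notebook), and the paper simply cites Ladisch's proof for solvable groups and Amiri's proposed proof for arbitrary groups, noting that only the solvable (odd-order) case is needed. So there is no paper proof to compare against; the only question is whether your argument stands on its own, and it does not. What you have is correct as far as it goes: the desired bijection is exactly a perfect matching of your bipartite graph; since neighbourhoods depend only on element orders and are monotone under divisibility, Hall's condition is equivalent to your down-set inequality $(\star)$; and for principal down-sets $(\star)$ is precisely $L_G(d)\geq d$, which is Frobenius's theorem (this already disposes of nilpotent $G$, where the divisor lattices are chains). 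But the non-principal case is not a technical loose end -- it is the entire content of the theorem -- and you leave it as an expectation rather than a proof.

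Worse, the toolkit you propose for closing it (Frobenius's bounds $d\mid L_G(d)$ and $L_G(d)\geq d$, monotonicity, supermodularity of $L_G$ and of $d\mapsto d$, and the pointwise bounds at $1$ and at primes) is provably insufficient. Take $n=75$ and the abstract data $a(1)=1$, $a(3)=2$, $a(5)=9$, $a(15)=3$, $a(25)=15$, $a(75)=45$, so that $L(1)=1$, $L(3)=3$, $L(5)=10$, $L(15)=15$, $L(25)=25$, $L(75)=75$. Every condition on your list holds: each $L(d)$ is a multiple of $d$ and at least $d$; $L$ is monotone along divisibility; supermodularity holds for every incomparable pair, e.g. $L(15)+L(25)=40\leq L(75)+L(5)=85$ and $L(3)+L(5)=13\leq L(15)+L(1)=16$; and $a(3)\geq\varphi(3)$, $a(5)\geq\varphi(5)$. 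Yet the down-set $D=\{1,3,5,15,25\}$ gives $\sum_{m\in D}a(m)=30<35=\sum_{m\in D}\varphi(m)$, violating $(\star)$. Hence no induction using only these numerical facts can succeed; genuine group theory must enter (in an actual group $a(5)=9$ is impossible, because elements of order $5$ are partitioned by the subgroups of order $5$ into classes of size $4$), and supplying that input is exactly what Ladisch's and Amiri's arguments do. In short, your proposal is an equivalent reformulation of Isaacs' problem together with a solution of its easiest special case; the hard part -- the part the paper outsources to the literature -- remains open in your write-up.
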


This has been formulated as a question by Isaacs (see Problem 18.1 in \cite{11}) and proved for solvable groups by Ladisch \cite{9}. A proof for arbitrary groups has been recently proposed by Amiri \cite{2}. We point out that we need this bijection only for groups of odd order, which are solvable.

We conjecture that the inequality $|E(C(G)^*)|\geq|E(C(\mathbb{Z}_n)^*)|$ also holds for any group $G$ of even order $n$. Note that in this case there are examples of non-cyclic groups $G$ of order $n$ with $|E(C(G)^*)|=|E(C(\mathbb{Z}_n)^*)|$, such as
\begin{equation}
|E(C(A_4)^*)|=|E(C({\rm Dic}_3)^*)|=|E(C(\mathbb{Z}_{12})^*|=7.\nonumber
\end{equation}

Most of our notation is standard and will usually not be repeated here. For basic notions and results on groups we refer the reader to \cite{8}.

\section{Proof of the main result}

We start by proving some auxiliary results.

\begin{lemma}
Let $G_i$, $i=1,...,k$, be finite groups of coprime orders and $G=G_1\times\cdots\times G_k$. Then
\begin{equation}
|E(C(G)^*)|=\left(\sum_{i=1}^k\frac{|E(C(G_i)^*)|}{|C(G_i)|}\right)\prod_{i=1}^k|C(G_i)|.
\end{equation}In particular, if $n=p_1^{n_1}\cdots p_k^{n_k}$ is the decomposition of $n\in\mathbb{N}^*$ as a product of prime factors, then
\begin{equation}
|E(C(\mathbb{Z}_n)^*)|=\left(\sum_{i=1}^k\frac{n_i}{n_i+1}\right)\prod_{i=1}^k(n_i+1).
\end{equation}
\end{lemma}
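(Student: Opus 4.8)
The plan is to prove the product formula (2.1) first, then derive the cyclic-group formula (2.2) as the special case $G_i = \mathbb{Z}_{p_i^{n_i}}$.

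The plan is to prove the product formula (2.1) first, and then obtain (2.2) as the special case in which each factor is a cyclic group of prime power order.

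First I would establish a poset isomorphism $C(G)\cong C(G_1)\times\cdots\times C(G_k)$. The essential point is coprimality: if $a=(a_1,\ldots,a_k)\in G$, then the orders $o(a_i)$ are pairwise coprime (each $o(a_i)$ divides $|G_i|$), so $o(a)=\prod_i o(a_i)$ and $\langle a\rangle=\langle a_1\rangle\times\cdots\times\langle a_k\rangle$. Conversely, any product $\langle a_1\rangle\times\cdots\times\langle a_k\rangle$ of cyclic subgroups of the factors is itself cyclic, being generated by $(a_1,\ldots,a_k)$. Hence $H\mapsto(\pi_1(H),\ldots,\pi_k(H))$, with $\pi_i$ the projection onto $G_i$, is a bijection from $C(G)$ onto the product of the $C(G_i)$, and it clearly preserves and reflects inclusion, so it is an isomorphism of posets. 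In particular $|C(G)|=\prod_i|C(G_i)|$.

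Next I would count the edges of the Hasse diagram of a product poset $P=P_1\times\cdots\times P_k$. A covering relation $x\lessdot y$ occurs precisely when $x$ and $y$ agree in all but one coordinate $i$ and satisfy $x_i\lessdot y_i$ in $P_i$: if they differed in two coordinates $i\neq j$, one could insert the intermediate element obtained by raising only the $i$-th coordinate to $y_i$, contradicting the covering condition; and within a single differing coordinate an intermediate element of $P_i$ would likewise lift to an intermediate element of $P$. Therefore each edge is specified by a choice of index $i$, an edge of $C(G_i)^*$, and arbitrary fixed vertices in the remaining factors, which gives
\[
|E(C(G)^*)|=\sum_{i=1}^k|E(C(G_i)^*)|\prod_{j\neq i}|C(G_j)|.
\]
Factoring $\prod_{j=1}^k|C(G_j)|$ out of each summand yields (2.1). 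For (2.2) I would then apply (2.1) to $\mathbb{Z}_n\cong\mathbb{Z}_{p_1^{n_1}}\times\cdots\times\mathbb{Z}_{p_k^{n_k}}$, whose factors have coprime orders. Since every subgroup of $\mathbb{Z}_{p^m}$ is cyclic and the subgroups form a single chain of length $m$, the graph $C(\mathbb{Z}_{p^m})^*$ is a path with $m+1$ vertices and $m$ edges; thus $|C(\mathbb{Z}_{p_i^{n_i}})|=n_i+1$ and $|E(C(\mathbb{Z}_{p_i^{n_i}})^*)|=n_i$, and substituting these values into (2.1) gives (2.2).

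I expect the main obstacle to be the first step: verifying cleanly that coprimality makes $C(G)$ decompose as a direct product of posets, together with the characterization of covering relations in a product poset that ensures edges correspond exactly to single-coordinate covers. Once these structural facts are in place, the edge count and the specialization to $\mathbb{Z}_n$ are routine.
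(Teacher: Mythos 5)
Your proof is correct and is essentially the paper's argument with the details filled in: the paper disposes of equality (1) with ``follows immediately by induction on $k$,'' and the structural facts you verify (the poset isomorphism $C(G)\cong C(G_1)\times\cdots\times C(G_k)$ under coprimality and the single-coordinate characterization of covering relations in a product poset) are exactly what that induction implicitly relies on, while your treatment of (2) via the chain of subgroups of $\mathbb{Z}_{p^m}$ matches the paper's observation that $|C(\mathbb{Z}_{p_i^{n_i}})|=n_i+1$ and $|E(C(\mathbb{Z}_{p_i^{n_i}})^*)|=n_i$. Doing the count directly for all $k$ rather than by induction is a cosmetic difference, not a different route.
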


\begin{proof}
The equality (1) follows immediately by induction on $k$. For the equality (2), it suffices to observe that $|C(\mathbb{Z}_{p_i^{n_i}})|=n_i+1$ and $|E(C(\mathbb{Z}_{p_i^{n_i}})^*)|=n_i$, for all $i=1,...,k$.
\end{proof}

A simple way to count the number of edges in the cyclic subgroup graph of a finite group is given by the following lemma.

\begin{lemma}
Let $G$ be a finite group. Then
\begin{equation}
|E(C(G)^*)|=\sum_{a\in G}\frac{\omega(o(a))}{\varphi(o(a))}\,,
\end{equation}where $\omega(d)$ is the number of distinct primes dividing $d\in\mathbb{N}^*$ and $\varphi$ is the Euler's totient function.
\end{lemma}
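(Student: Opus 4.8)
The plan is to count edges of $C(G)^*$ by summing, over all cyclic subgroups $H \in C(G)$, the number of edges incident to $H$ from below, i.e.\ the number of cyclic subgroups $K$ covered by $H$ in the poset $C(G)$. This avoids double-counting, since every edge $\{K, H\}$ with $K < H$ is counted exactly once, at its larger endpoint $H$. So the first step is to establish that
\begin{equation}
|E(C(G)^*)| = \sum_{H \in C(G)} (\text{number of cyclic subgroups covered by } H). \nonumber
\end{equation}

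The heart of the argument is then a local computation: for a fixed cyclic subgroup $H = \langle a \rangle$ of order $d = o(a)$, I would count the cyclic subgroups $K$ it covers from below. Since $H$ is cyclic of order $d$, its subgroups are exactly the cyclic groups of order $e$ for each divisor $e \mid d$, and $H$ covers $K$ precisely when $K$ has index a prime in $H$ — that is, $K$ has order $d/p$ for some prime $p \mid d$. For each such prime $p$ there is exactly one subgroup of order $d/p$ inside $H$, so the number of subgroups covered by $H$ in $C(G)$ equals the number of distinct prime divisors of $d$, namely $\omega(d)$. (Here I should check that covering relations inside $H$ coincide with covering relations in the ambient poset $C(G)$: any cyclic $K$ with $K < L < H$ would already lie inside $H$, so no intermediate cyclic subgroup of $G$ is missed.) This gives
\begin{equation}
|E(C(G)^*)| = \sum_{H \in C(G)} \omega(o(H)). \nonumber
\end{equation}

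Finally I would convert the sum over cyclic subgroups into a sum over group elements. The key fact is that a cyclic subgroup $H$ of order $d$ has exactly $\varphi(d)$ generators, so exactly $\varphi(d)$ elements $a \in G$ satisfy $\langle a \rangle = H$, and all of them share the same order $d = o(a)$. Thus grouping the element-sum $\sum_{a \in G} \omega(o(a))/\varphi(o(a))$ by the cyclic subgroup each element generates, each $H$ contributes $\varphi(o(H))$ identical terms $\omega(o(H))/\varphi(o(H))$, which sum to $\omega(o(H))$. Matching this against the previous display yields the claimed formula.

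The main obstacle is the cleanest justification of the local count, specifically the verification that the covering relations within the cyclic group $H$ are exactly the covering relations of $H$ in the full poset $C(G)$; once that is pinned down, the rest is a routine reindexing using the generator count $\varphi(d)$. A secondary point to state carefully is that distinct primes $p$ yield distinct subgroups of order $d/p$, which is immediate since subgroups of a cyclic group are determined by their order.
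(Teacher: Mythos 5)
Your proposal is correct and follows essentially the same route as the paper: the paper also counts each edge at its larger endpoint, writing $|E(C(G)^*)|=\sum_{H\in C(G)}|{\rm Max}(H)|$ (the maximal subgroups of a cyclic $H$ are exactly the cyclic subgroups it covers, which is your local count of $\omega(o(H))$), and then reindexes over elements using the $\varphi(o(a))$ generators of each cyclic subgroup. Your extra care in checking that covering relations in $C(G)$ agree with those inside $H$ is a point the paper leaves implicit, but the argument is the same.
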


\begin{proof}
We have
\begin{equation}
|E(C(G)^*)|=\sum_{H\in C(G)}|{\rm Max}(H)|,\nonumber
\end{equation}where ${\rm Max}(H)$ denotes the set of maximal subgroups of $H\in C(G)$. Clearly, this leads to (3) because a cyclic subgroup $H=\langle a\rangle$ of $G$ has $\omega(o(a))$ maximal subgroups and $\varphi(o(a))$ generators.
\end{proof}

The next lemma shows that the function in the right side of (3) is decreasing with respect to divisibility on the set of odd positive integers.

\begin{lemma}
Let $d$ and $d'$ be two odd positive integers such that $d\mid d'$ and $d\geq 3$. Then
\begin{equation}
\frac{\omega(d)}{\varphi(d)}\geq\frac{\omega(d')}{\varphi(d')}\,.
\end{equation}Moreover, we have equality if and only if either $d=d'$ or $d$ is a prime power $p^{\alpha}$ with $p\geq 5$ and $d'=3d$.
\end{lemma}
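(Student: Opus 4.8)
Writing $g(n)=\omega(n)/\varphi(n)$ for brevity, my plan is to reduce the general divisibility $d\mid d'$ to the elementary case in which $d'$ is obtained from $d$ by multiplying by a single prime, and then chain those steps. Concretely, since $d\mid d'$ and both are odd, I write $d'/d=q_1q_2\cdots q_r$ as a product of (odd) primes, not necessarily distinct, and form the chain $d=d_0\mid d_1\mid\cdots\mid d_r=d'$ with $d_i=q_id_{i-1}$. Because $d\geq 3$ is odd, every $d_i$ is again odd and at least $3$, so it suffices to analyse one step $d\mapsto qd$ with $q$ an odd prime.

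For a single step there are two cases. If $q\mid d$, then $\omega(qd)=\omega(d)$ while $\varphi(qd)=q\,\varphi(d)$, so $g(qd)=g(d)/q<g(d)$, a strict decrease. If $q\nmid d$, then $\omega(qd)=\omega(d)+1$ and $\varphi(qd)=(q-1)\varphi(d)$, so the desired inequality $g(d)\geq g(qd)$ is equivalent, after clearing denominators, to $\omega(d)(q-2)\geq 1$. This holds because $q\geq 3$ gives $q-2\geq 1$ and $d\geq 3$ gives $\omega(d)\geq 1$; moreover equality forces both factors to equal $1$, that is $q=3$ and $\omega(d)=1$. The latter means $d=p^{\alpha}$ is a prime power, and $3=q\nmid d$ forces $p\geq 5$, so the single-step equality occurs exactly when $d=p^{\alpha}$ with $p\geq 5$ and $qd=3d$.

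Chaining the weak inequalities $g(d_{i-1})\geq g(d_i)$ along the chain immediately yields $g(d)\geq g(d')$, proving (4). For the equality case, suppose $d<d'$, so $r\geq 1$. Since $g(d)=g(d_0)\geq g(d_1)\geq\cdots\geq g(d_r)=g(d')$ collapses to a single value, every step must be an equality. By the single-step analysis the first step forces $d=d_0=p^{\alpha}$ with $p\geq 5$ and $q_1=3$, whence $d_1=3p^{\alpha}$. If $r\geq 2$ there would be a further equality step out of $d_1$; but equality in a step requires its source to be a prime power, and $d_1=3p^{\alpha}$ has two distinct prime divisors, a contradiction. Hence $r=1$ and $d'=3d$, which is exactly the claimed characterization, the case $d=d'$ being the trivial equality.

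The main obstacle I anticipate is not the inequality itself, which is a short computation once reduced to one prime, but pinning down the equality case cleanly. The delicate point is to rule out several equality steps occurring in succession; this is handled by the observation that a step can be an equality only when it starts from a prime power, and that the one permissible equality step, multiplication by $3$, destroys the prime-power property, so no second equality step is possible. One should also confirm that the argument is independent of the chosen factorization of $d'/d$, which it is, since the conclusion $r=1$ and $d'=3d$ is intrinsic to $d$ and $d'$.
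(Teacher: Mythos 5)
Your proof is correct, and it takes a genuinely different route from the paper's. The paper argues globally: writing $d=p_1^{\alpha_1}\cdots p_r^{\alpha_r}$ and $d'=p_1^{\beta_1}\cdots p_r^{\beta_r}p_{r+1}^{\beta_{r+1}}\cdots p_s^{\beta_s}$, it rewrites (4) as $\varphi(d')/\varphi(d)\geq s/r$, observes it suffices to prove $(p_{r+1}-1)\cdots(p_s-1)\geq s/r$, bounds the left side below by $2^{s-r}$ using oddness, and finishes via the monotonicity of $x\mapsto 2^{x-r}-x/r$; the equality case is then read off from when all these estimates are simultaneously tight. You instead factor the quotient $d'/d$ into odd primes and telescope along a divisor chain, reducing everything to the single-prime step $d\mapsto qd$, where the inequality amounts to $\omega(d)(q-2)\geq 1$ when $q\nmid d$ and is a strict decrease when $q\mid d$. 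Your approach buys a cleaner, more self-contained equality analysis: equality in a step forces the source to be a prime power and $q=3$, and since multiplying by $3$ destroys the prime-power property, at most one equality step can occur, so the characterization ($d=d'$, or $d=p^{\alpha}$ with $p\geq 5$ and $d'=3d$) falls out automatically --- whereas the paper's equality discussion is stated rather tersely, leaving the reader to check which bounds are tight. The paper's approach, in turn, gets the inequality in one pass without setting up a chain or addressing the choice of factorization of $d'/d$ (a point you rightly note is harmless in your setup, since for any fixed ordering all steps must be equalities).
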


\begin{proof}
Let $d=p_1^{\alpha_1}\cdots p_r^{\alpha_r}$ and $d'=p_1^{\beta_1}\cdots p_r^{\beta_r}p_{r+1}^{\beta_{r+1}}\cdots p_s^{\beta_s}$, where $1\leq\alpha_i\leq\beta_i$ for all $i=1,...,r$ and $r\leq s$. Then (4) becomes
\begin{equation}
p_1^{\beta_1-\alpha_1}\cdots p_r^{\beta_r-\alpha_r}p_{r+1}^{\beta_{r+1}-1}\cdots p_s^{\beta_s-1}(p_{r+1}-1)\cdots(p_s-1)\geq\frac{s}{r}\,,\nonumber
\end{equation}which is true for $s=r$. So, we can assume that $s\geq r+1$. We remark that it suffices to show
\begin{equation}
(p_{r+1}-1)\cdots(p_s-1)\geq\frac{s}{r}\,.\nonumber
\end{equation}Since each $p_i$ is odd, we get
\begin{equation}
(p_{r+1}-1)\cdots(p_s-1)\geq 2^{s-r}.\nonumber
\end{equation}On the other hand, we can easily see that the function $f(x)=2^{x-r}-\frac{x}{r}$ is increasing for $x\geq r+1$. Thus
\begin{equation}
2^{s-r}-\frac{s}{r}\geq 2-\frac{r+1}{r}\geq 0,\nonumber
\end{equation}as desired.

Note that the equality occurs in (4) if and only if either
\begin{equation}
r=s \mbox{ and } \alpha_i=\beta_i, \forall\, i=1,...,r\nonumber
\end{equation}or
\begin{equation}
r=1, s=2, \alpha_1=\beta_1, \beta_2=1 \mbox{ and } p_2=3,\nonumber
\end{equation}that is if and only if either $d=d'$ or $d=p_1^{\alpha_1}$ with $p_1\geq 5$ and $d'=3d$. This completes the proof.
\end{proof}

We mention that the inequality (4) is not true when $d'$ is even. For example, for $d$ odd and $d'=2d$ we have
\begin{equation}
\frac{\omega(d)}{\varphi(d)}<\frac{\omega(d)+1}{\varphi(d)}=\frac{\omega(d')}{\varphi(d')}\,.\nonumber
\end{equation}

We are now able to prove our main result.

\bigskip\noindent{\bf Proof of Theorem 1.1.} Assume first that $G$ is nilpotent and let $G=G_1\times\cdots\times G_k$ be its decomposition as a direct product of Sylow subgroups. Denote $|G_i|=p_i^{n_i}$, $i=1,...,k$. By Theorem 2.5 of \cite{7}, we have
\begin{equation}
|C(G_i)|\geq |C(\mathbb{Z}_{p_i^{n_i}})|=n_i+1,\nonumber
\end{equation}with equality if and only if $G_i\cong\mathbb{Z}_{p_i^{n_i}}$. Using Lemma 2.2, this leads to
\begin{align*}
|E(C(G_i)^*)|&=\sum_{H\in C(G_i)}|{\rm Max}(H)|=\sum_{H\in C(G_i)\setminus 1}1=|C(G_i)|-1\\
&\geq |C(\mathbb{Z}_{p_i^{n_i}})|-1=|E(C(\mathbb{Z}_{p_i^{n_i}})^*)|=n_i.\nonumber
\end{align*}Then equalities (1) and (2) in Lemma 2.1 imply that
\begin{equation}
|E(C(G)^*)|\geq|E(C(\mathbb{Z}_n)^*)|,\nonumber
\end{equation}with equality if and only if $G\cong\mathbb{Z}_n$.

Assume next that $n$ is odd and let $f:G\longrightarrow\mathbb{Z}_n$ be a bijection such that $o(a)$ divides $o(f(a))$, for all $a\in G$. Then
\begin{equation}
\frac{\omega(o(a))}{\varphi(o(a))}\geq\frac{\omega(o(f(a)))}{\varphi(o(f(a)))}\,, \forall\, a\in G\nonumber
\end{equation}by Lemma 2.3 and therefore we have
\begin{equation}
|E(C(G)^*)|=\sum_{a\in G}\frac{\omega(o(a))}{\varphi(o(a))}\geq\sum_{a\in G}\frac{\omega(o(f(a)))}{\varphi(o(f(a)))}=|E(C(\mathbb{Z}_n)^*)|.\nonumber
\end{equation}

Finally, assume that $|E(C(G)^*)|=|E(C(\mathbb{Z}_n)^*)|$, but $G$ is not cyclic. Then
\begin{equation}
\frac{\omega(o(a))}{\varphi(o(a))}=\frac{\omega(o(f(a)))}{\varphi(o(f(a)))}\,, \forall\, a\in G.\nonumber
\end{equation}Let $a_1, ..., a_{\varphi(n)}\in G$ with $o(f(a_i))=n$ for all $i$. By Lemma 2.3, it follows that there is a prime $p$ and a positive integer $\alpha$ such that \begin{equation}
n=3p^{\alpha} \mbox{ and } o(a_i)=p^{\alpha}, \forall\, i=1,...,\varphi(n).\nonumber
\end{equation}Then, from Sylow's theorems, we infer that $G$ has a unique (normal) Sylow $p$-subgroup. On the other hand, since $G$ contains at least $\varphi(n)=2\varphi(p^{\alpha})$ elements of order $p^{\alpha}$, it will contain at least two cyclic subgroups of order $p^{\alpha}$, a contradiction.

The proof of Theorem 1.1 is now complete.$\qed$
\bigskip

\noindent{\bf Acknowledgments.} The author is grateful to the reviewer for remarks which improve the previous version of the paper.
\bigskip

\vspace*{3ex}\small

\hfill
\begin{minipage}[t]{5cm}
Marius T\u arn\u auceanu \\
Faculty of  Mathematics \\
``Al.I. Cuza'' University \\
Ia\c si, Romania \\
e-mail: {\tt tarnauc@uaic.ro}
\end{minipage}

\end{document}